\def\og{``}
\def\fg{''}
\newcounter{thenum}
\newenvironment{theorem}{\medbreak\refstepcounter{thenum}\textsc{Theorem} %
\thethenum. ---  \em  }{\rm\smallbreak}
\newenvironment{definition}{\medbreak\refstepcounter{thenum}
\textsc{Definition} \thethenum. ---  \em  }{\rm\smallbreak}
\newenvironment{remark}{\medbreak\refstepcounter{thenum}{\em Remark} %
\thethenum. --- }{\smallbreak}
\newenvironment{example}{\medbreak\refstepcounter{thenum}{\em
    Example} %
\thethenum. --- }{\smallbreak}
\newenvironment{proof}{\unskip\smallbreak{\sc Proof.} --- \rm}{\quad\bull\smallskip\rm}
\def\cI{{\cal I}}
\def\cJ{{\cal J}}
\def\bJ{{\bf J}}
\def\Jm{J\hbox to 0pt{$\setminus$\hss}}
\def\cO{{\cal O}}
\def\cP{{\cal P}}
\def\N{{\bf N}}
\def\R{{\bf R}}
\def\ord{{\rm ord}}
\def\bull{\vrule height .9ex width .8ex depth -.1ex}
\def\pa{\phantom{^{\ast}}}
\begin{document}

\title{Jacobi's bound and normal forms computations.\\ 
A historical survey}
\author{ F.~Ollivier, LIX UMRS CNRS--\'Ecole polytechnique n$^{\rm o}$~7161\\ 
    Email~\texttt{francois.ollivier@lix.polytechnique.fr.}\\
  \texttt{http://www.lix.polytechnique.fr/\lower0.8ex\hbox{\~{ }}ollivier/indexEngl.htm}}

\maketitle

{\narrower\it\noindent This work is dedicated to the memory of
Giuseppa Carrà Ferro and\hfill\break Evgeni{\u\i} Vasil'evich
Pankratiev.

}
\bigskip\bigskip

\begin{abstract}
\noindent Jacobi is one of the most famous mathematicians of his
century. His name is attached to many results in various fields of
mathematics and his complete works in seven volumes have been
available since the end of the \textsc{xix}$^{\rm th}$ century and are
very often quoted in many papers. It is then surprising that some of
his results may have fallen into oblivion, at least in part. We will
try to describe some of Jacobi's results on ordinary differential
equations and the available, published or unpublished material he
left. We will then expose the selective interests of his followers and
their own contributions.

There are in fact many interrelated results: a bound on the order of a
differential system, a necessary and sufficient condition, given by a
determinant, for the bound to be reached, an algorithm to compute the
bound in polynomial time, and processes for computing normal forms
using as few derivatives as possible.

We give for all of them the form under which they could have been
proved or rediscovered, sometimes independently of Jacobi's findings. In
conclusion, we give the state of the art and suggest some possible
applications of Jacobi's bound to improve some algorithms in
differential algebra.

\end{abstract}


\section{Introduction} In two posthumous articles
\cite{Jacobi1,Jacobi2}, which have been recently
translated\cite{Jacobi1e,Jacobi2e}, Jacobi 
has introduced a bound on the order of a system of $n$ ordinary
differential equations in $n$ unknowns. Let $A:=(a_{i,j})$ be the
matrix such that $a_{i,j}$ is the order of the equation $u_{i}$ in the
unknown function $x_{j}$. Let $J=\max_{\sigma\in S_{n}}
\sum_{i=1}^{n} a_{i,\sigma(i)}$. A sum $\sum_{i=1}^{n}
a_{i,\sigma(i)}$ is called a \emph{transversal sum} and $J$ is the
\emph{maximal transversal sum }. He claims that:
\medskip

\emph{\textsc{Jacobi's bound}. --- The order of the system
is bounded by $J$.}
\smallskip

\noindent The bound is still conjectural in the general
case.

\emph{\textsc{Jacobi's algorithm}. --- Jacobi gave an algorithm to
compute the bound in polynomial time, viz $\cO(n^{3})$ operations,
instead of trying the $n!$ permutations.}
\smallskip

\noindent It has been forgotten and rediscovered by Kuhn
in 1955  \cite{Kuhn55}, using Egerv\'ary's results (see Schrijver's
paper \cite{Schrijver05} for historical details).  The idea is to find
a \emph{canon}, i.e.\ $\lambda\in\N^{n}$ such that, in the matrix
$(a_{i,j}+\lambda_{i})$ one can select maximal entries in each column
that are located in different rows. Jacobi's algorithm computes the
unique canon with a minimal $n$-uple of integres $\lambda_{i}$ that we
denote by $\ell$. Let $\Lambda=\max_{i}
\ell_{i}$, $\alpha_{i}=\Lambda -\ell_{i}$ and
$\beta_{j}=\max_{i} a_{i,j}-\alpha_{i}$. The \emph{truncated jacobian
matrix} $\nabla$ is the matrix $\left(\partial u_{i}/\partial
x_{j}^{(\alpha_{i}+\beta_{j})}\right)$.
\medskip

\emph{\textsc{The truncated determinant condition}. --- Jacobi claims that the
order of the system is equal to the bound $J$ iff $|\nabla|\neq 0$.}
\smallskip

\noindent This implicitly assumes the \emph{strong bound}, defined with the
convention $\ord_{x_{j}} u_{i}=-\infty$ if $u_{i}$ is free of $x_{j}$
and its derivatives, but he gives no detail about what should be done
in such a case. 
\medskip

\emph{\textsc{The shortest reduction
method}. --- Jacobi also asserts that 
  it is possible to compute a normal form using only $\ell_{i}$
  derivatives of equation $u_{i}$ and that it is impossible to
  compute one using a smaller number of derivatives.}
\smallskip

\noindent This implicitly assumes $|\nabla|\neq0$; if not, a greater
  number of derivatives may be required. This is only generically true:
  for some particular systems, it is possible to differentiate $u_{i}$ at
  most $\ell_{i}-\ell_{i+1}$ times, for $i<n$, assuming
  $\lambda_{1}\ge\cdots\ge\lambda_{n}$). Jacobi also gives a bound on
  the order of derivation of the $u_{i}$ required to compute a
  resolvent representation, using $x_{j}$ as a differential primitive
  element---assuming it is one. {\em Then, $u_{i}$ must be differentiated a
  number of times equal to the maximal transversal sum of the matrix
  obtained by suppressing the row $i$ and the column $j$ in $A$.}
\bigskip

The aim of this paper is to describe the content of Jacobi's two
papers  \cite{Jacobi1,Jacobi2}, the genesis of
these results, the history of research on the subject and the state of
the art. We also describe some related documents from \emph{Jacobis
  Nachla{\ss}}, kept in the \emph{Archiv der Berlin-Brandenburgische
  Akademie der Wissenschaften} and give a complete\footnote{For the
  best of our knowledge\dots\ any information about material or
  sources not mentionned here is welcome.} list of references.

\section{Unpublished manuscripts} Jacobi himself is
possibly the first to have forgotten his own work. According to
Koenigsberger  \cite{Koenigsberger1904}, his manuscripts
on this subject were written around 1836 and were intended to be a
part of a forsaken project of a great work on differential
equations. Part of it was incorporated in his long paper on
the last multiplier \cite{Jacobi3}, but the bound itself was never published in his
lifetime. The many versions of the text, containing numerous
corrections, suggest that Jacobi was not satisfied of the
redaction. However, these manuscripts were clearly intended for
publication at the time he wrote them, as it is suggested by the many
typographical precisions in German, written in \emph{Kurrentschrift}
in the margins. In his preface to [VD], were the second article \cite{Jacobi2}
was first published, Clebsch asserts that it is a little
posterior to Jacobi's lectures at K{\H o}nigsberg university during
academic year 1842--43, when Borchardt was his student there.

On page~2231 of manuscript II/23 a), one finds a reference to a paper
\cite{Jacobi4} published in 1834 and his work on normal form
computation is mentionned in the second part of the last multiplier
paper \cite{Jacobi3}, published in 1845, but one may think that the
results were obtained before the redaction of that paper; the first
page of the manuscript I/58 a) shows that Jacobi began to correct it
in Roma, in december 1843.  Assuming that manuscripts {II/13 b),
II/22, II/23 a), II/23 b)} where written at the same time, as many
similarities of style and content suggest, it must have been between
1834 and the end of 1843, certainly not later than 1845.
\medskip

These results are a by-product of his work on the
\emph{isoperimetric problem}: {\em ``Let $U$ be a given
function of the independent variable $t$, the dependent ones $x$, $y$,
$z$ etc.\ and their derivatives $x'$, $x''$, etc., $y'$, $y''$, etc.,
$z'$, $z''$, etc.\ etc.\ If we propose the problem of determining the
functions $x$, $y$, $z$ in such a way that the integral
$$
\int U dt
$$
be \emph{maximal or minimal} or more generally that the differential of
this integral vanishes, it is known that the solution of the problem
depends on the integration of the system of differential equations:
\def\cale{\vbox to 6pt{\vfill}\vtop to 2pt{\vfill}}
$$\begin{array}{rcl} 
0 &=&\frac{\cale \partial U}{\cale \partial x} -
\frac{d\frac{\cale \partial U}{\cale \partial x'}} 
  {\cale dt} + \frac{d^{2}\frac{\cale \partial U}{\cale \partial
x''}}{\cale dt^{2}} - \hbox{etc.,}\\ 
0 &=&\frac{\cale \partial U}{\cale \partial y} -
\frac{d\frac{\cale \partial U}{\cale \partial y'}} 
  {\cale dt} + \frac{d^{2}\frac{\cale \partial U}{\cale \partial
y''}}{\cale dt^{2}} - \hbox{etc.,}\\ 
0 &=&\frac{\cale \partial U}{\cale \partial z} -
\frac{d\frac{\cale \partial U}{\cale \partial z'}} 
  {\cale dt} + \frac{d^{2}\frac{\cale \partial U}{\cale \partial
z''}}{\cale dt^{2}} - \hbox{etc.\ etc.} 
\end{array}
$$
I will call these in the following \emph{isoperimetric differential
  equations} \dots''\/} (see Jacobi's last multiplier article
\cite{Jacobi3}, GW~IV p.~495).  

If the highest order derivative of $x_{i}$ in $U$ is
$x_{i}^{(e_{i})}$, the order of $x_{j}$ in the $i^{\rm th}$
isoperimetric equation is $e_{i}+e_{j}$. Then, if the $e_{i}$
are not all equal to the maximal order $e:=\max_{i} e_{i}$, we cannot
compute a normal form without using \emph{auxiliary equations}
obtained by differentiating the $i^{\rm th}$ isoperimetric equation
$\lambda_{i}$ times with $\lambda_{i}=e-e_{i}$. It is also clear that
$J=2\sum_{i} e_{i}$ is equal to the order of the system, provided that
the Hessian matrix $(\partial^{2}U/\partial x_{i}\partial x_{j})$ has
full rank. We understand how this example can have inspired the whole
theory.
\medskip

In a letter to his brother Moritz, on Sept. 17$^{\rm th}$ 1836
\cite{Briefwechsel}, Jacobi writes 
about a huge manuscript on mechanics: \emph{``I came accross some very
abstract ideas about the treatment of differential equations that
appear in mechanical problems, for these differential equations, with
their special form, allow some simplifications for the integration,
that had not yet been remarked. These considerations will be all the
more important, I think, as they extend to the differential equations
that appear both in the isoperimetrical problem and the integration of
partial differential equations of the first order.''}  It seems that
the remaining manuscripts come from the time of these first
investigations, so between  1836 and 1840, rather than from some later attempt. The
tables $A$--$H$ given in  \cite{Jacobi2} are written on page 2250.a of
the manuscript;
on the back there is a table of the doubles of prime numbers equal to
$1$ modulo $8$, from $2018$ to $20018$, a material that could reflect
the strong interest of Jacobi in number theory and prime numbers, a
short time before the publication of Jacobi's \emph{Canon arithmeticus}
 \cite{Canon}, a table of discrete logarithms, in 1839. In a letter to
the Académie des Sciences de Paris, published by Liouville in
1840 \cite{AcademyLetter}, he evocates his work on mechanics. Liouville's
commentaries are enthousiastic and suggest that a book will appear
soon. But Jacobi writes to his brother in january
1841 \cite{Briefwechsel} that he is embarrassed, as he
does not have enough ``breath'' to achieve his huge project, that should
have been entitled \emph{Phoronomie}, the study of
physical bodies motion.
\smallskip

But in 1845, Jacobi had clearly still in mind to publish a study on normal
forms computation, for he wrote: \emph{``I will expose in another paper the
various ways by which this operation may be done, for this question
requires many outstanding theorems that necessitate a longer
exposition.''} \cite{Jacobi3} 
\medskip

It is quite possible that this project was forgotten because of a
change in Jacobi's life---who definitely left Koenigsberg to Berlin
after a long trip in Italy---that also opened new contacts and new
scientifitic issues. One may also consider a possible lack of
practical examples for such a general method of computing normal
forms. The algorithm may have suffered the same absence of contemporary
applications.  Jacobi was right claiming that the problem of computing
the bound was of interest by itself, but the economical questions that
strongly motivated the mathematicians of the last century were not yet
considered in the middle of the \textsc{xix}$^{\rm th}$ century. (See
section \ref{assignment}.)

\section{The publication of the manuscripts} 

Jacobi's widow gave the manuscripts he left to Dirichlet who
began to work for their publication with his friends
Borchardt and Joachimsthal. Very few documents
remain from their work and the best source seems to be Koenigsberger
 \cite{Koenigsberger1904}. The papers were in great
disorder. In order to class them, they gave a number to each
page. These numbers appear on the envelops were pages that seemed to
form a single document or to be related were stored.

Borchardt entrusted Sigismund Cohn\footnote{Almost nothing is known
about him. There was a student of that name at Könisberg
university in 1842-43; in 1846 some Sigismundus Cohn defended in
Breslau an inaugural dissertation entitled \emph{De medicina
talmudica}. According to the \emph{vita} that follows this work in the
copy kept at the library of Alliance Israélite Universelle, he was a different man with no interest
in mathematics.}, who worked on the publication of some others
manuscripts of Jacobi, with the documents related to the bound. Cohn
indentified (see II/13 a) two sets of manuscripts suitable for
publication
II/13 b), II/23 b) and worked on a transcription {II/13 c)} of
these sometimes hardly readable texts. After his death in 1861 \cite{Jacobi5}, the
work was continued by Borchardt who published the first paper
 \cite{Jacobi1} in his journal in 1865. The second  \cite{Jacobi2} was
published by Clebsch in the volume \emph{Vorlesungen über Dynamik}
[VD] in 1866. This one was quoted by Sofya Kovalevskaya in one
of her most famous articles \cite{Kovalevskaya1875} in 1875. The fact
that these papers were written in latin did not seem to have been a
trouble at that time. Cohn and Borchardt could easilly write themselves
some paragraphs to fill gaps in the manuscripts and Borchardt even
tried to rewrite full passages in order to make them clearer. In his
biography  \cite{Koenigsberger1904}, published in 1904, Koenigsberger
did not translate the many quotations in Latin, French and Italian.

Borchardt also wrote some kind of abstracts of the two papers, which
show that he fully understood their content and that he did not
consider what he published as devoid of rigor. A slightly ironical
quotation of Jacobi himself \emph{``Tam quaestiones altioris indaginis
poscuntur.''}\footnote{Then these questions require further investigation.}
concludes the abstract of a part that clearly did not satisfy
Borchardt's standards and was not kept in the published version (see
II/13 c).

\section{Jacobi's mathematical results}

\subsection{The manuscripts}
The first paper  \cite{Jacobi1} begins with the exposition of the bound
and the truncated determinant criterion. Then, the main part of the
paper is devoted to a careful explanation of the algorithm with
complete proofs. To obtain this paper, Cohn put together two different
texts from document II/13 b). The first $6$ pages of the
manuscript, reproduced in Cohn's transcription, were not kept by
Borchardt (see sec.~3). They are related to the different normal forms that a given
system could have, with a quite complete description of systems of $2$
equations in $2$ differential unkowns.

The second  \cite{Jacobi2} begins with a fast exposition of the
algorithm, without any proof, followed by the example of a $10\times
10$ matrix. Jacobi then explains how to compute a normal form, using as
few derivatives of each variable as possible and how to compute a
resolvent representation for some variable $x_{\kappa}$, using
again as few derivatives of each equation as possible. This paper
reproduces with very few changes a single manuscript {II/23 b)}.
\medskip

Cohn considered documents {II/22, II/23 a)} as unusable because they
investigate how Jacobi's last multiplier behaves when one changes the
order on derivatives and computes a new normal form. 
Borchardt and him wanted to avoid the multiplier theory, possibly
because Jacobi decided not to include this material in his paper on
the subject. The \S~17 in manuscript II/23~a) fos 2217--2220
corresponds to the same paragraph in the last multiplier paper
\cite{Jacobi3}\footnote{See Crelles~29, Heft~3 221--225 or GW~IV
403--407.}, but the manuscript considers the general situation, whereas
Jacobi retreated to the linear case in the published version.

Document {II/4} is not related to the bound or normal form
computation, but rather to the last multiplier theory. It seems 
an interesting unpublished paper of Jacobi on differential
equations, including results such as the linear independence criterion
given in Ritt's {\em Differential algebra\/} \cite{Ritt50} p.~34.

\subsection{The algorithm}\label{algo} A square table $(a_{i,j})$ being given,
Jacobi calls \emph{transversal maxima} numbers being maximal in
their column, that are located in all different rows. The idea of
jacobi for computing $\max_{\sigma\in S_{n}}
\sum_{i=1}^{n}a_{i,\sigma(i)}$ is to look to what he calls a
\emph{canon}\footnote{The choice of this strange word may be related
to its use in the title: \emph{Canon arithmeticus}, that according
to Schumacher came from a play on words: the computations were done by
a \emph{Kanonier Unteroffizier}. See Jacobi's {\em
Briefwechsel\/} \cite{Briefwechsel} note~2 p.~62.}, that is a $n\times
n$ square table where one finds a maximal set of $n$ transversal
maxima. One starts with a table $(a_{i,j})$ and computes a canon by
adding to all the elements of row $i$ some integer $\ell_{i}$. At
each step of Jacobi's algorithm, one tries to increase the number
of transversal maxima. The integers $\ell_{i}$ computed by the
algorithm are the smallest ones, meaning that there is no canon
derived from the table $a_{i,j}$ such that one of these integers may
be smaller.
\medskip

The algorithm may be sketched as follows. We start with the \emph{
preparation process}: we add suitable integers to the rows of the
table, so that each row possess a \emph{maximum}, i.e.\ an element being
maximal in its column. 
\smallskip

The second part of the algorithm starts with the \emph{prepared table}
the maximum of the first row being the first set of transversal
maxima being considered. 

A set of transversal maxima being given, we will repeatedly compute a
new set containing one more element. We reorganise the rows and
columns in the following way: \emph{upper rows} are the rows
containing the transversal maxima and \emph{lower rows} the remaining ones;
\emph{left columns} are the columns containing the transversal maxima and
\emph{right columns} the remaining ones. We denote by asterisks the
transversal maxima and the maxima (in their columns) that are located
in the upper rows and right columns: maxima with an asterisk are the
\emph{stared maxima}. If there is some maximal element in some lower
row and right column, we may already add it to the set of transversal
maxima.
\smallskip

We say that \emph{there is a path} (\emph{transitum dari}) from
row $i$ to row $j$ if some element of row $j$ is equal to a stared
maximum in $i$, or if there is a path from row $i$ to row $i'$ and
from $i'$ to $j$. Rows of the \emph{first class} are the upper rows
containing a stared element in a right column and all the rows to
which there is a path from them.  If there is a lower row $j$ in the
first class, there is a path from an upper row $i_{0}$ with a stared
maximum $\alpha_{0}$ in a right column to an upper row $i_{1}$
possessing an element $\alpha_{1}$ equal to the transversal maxima of
$i_{0}$, then a path from row $i_{1}$ to row $i_{2}$ possessing an
element $\alpha_{2}$ equal to the transversal maxima of $i_{1}$, then
a path from row $i_{2}$ to row $i_{3}$\dots\ and at the end a path from
an upper row $i_{r-1}$ to the lower row $i_{r}=j$ containing an
element $\alpha_{r}$ equal to the transversal maxima in $i_{r-1}$. We
may then replace in the set of transversal maxima those located in the
rows $i_{0}$, \ldots, $i_{r-1}$ by $\alpha_{0}$, \ldots,
$\alpha_{r-1}$ and get a greater set of transversal maxima by adding
$\alpha_{r}$. \emph{ There is no lower series of the first class iff
the set of transversal maxima is maximal.}

If so, we will need to increase some rows. The rows not in the first
class, from which there is a path to a lower row form the \emph{third
class}. The \emph{second class} contains the remaining rows. We will
increase all the elements of the third class rows by the minimal
integer such that one of them become equal to a stared element $\beta$
in a row of the first or second class.  The computations of the
maxima in each column and of the partition of the rows in classes may
be done in $\cO(n^{2})$ operations.
\smallskip

If the element $\beta$ belongs to the second class, its rows goes to
the third, as well as all the rows from which there is a path to it,
so that the change in the classes partition may be computed in
$\cO(n)$. This may happen at most $\cO(n)$ times before there is no
more elements in the second class. If $\beta$ belongs to the first
class, some lower series of the third class will go to the first, and the
number of transversal maxima will be increased, as we have seen. So,
we need to perform the partition in classes and exhaust the second
class, with total cost $\cO(n^{2})$, at most $n$ times
before we get a maximal set of $n$ transversal maxima. The complexity
of the whole algorithm is $\cO(n^{3})$.
\medskip

Basically, this algorithm is the same as Kuhn's ``Hungarian method''
(see section \ref{assignment}), but Kuhn also adds constants to the
columns, and not only to the rows. The complexity of Jacobi's
algorithm, $\cO(n^{3})$, is the same as the complexity of the variant
of Kuhn's method given by Munkres \cite{Munkres57}.

Jacobi proved further that the $\ell_{i}$ are minimal. This result is
a consequence of the two following propositions: \emph{i) there is no \emph{
unchanged} row of the third class, i.e.\ with $\ell_{i}=0$}; \emph{ii) the
numbers added to the third class series in the algorithm are the
minimal ones that may change the partitions in classes}.
\smallskip

On may remark that this is the only place in the two posthumous papers
 \cite{Jacobi1,Jacobi2}\footnote{Together with a short passage in the
second paper
 \cite{Jacobi2} related to resolvent computation and using the same
kind of combinatorial arguments. See subsection \ref{resolvent}.} where Jacobi
provides complete proofs of his results.

\subsection{The shortest reduction in normal form} 

Jacobi provides \cite{Jacobi2} a method to compute a normal form,
using as few derivatives as possible of the given equations
\begin{equation}
\label{systeme}
u_{1}(x_{1}, \ldots, x_{n})=0, \ldots, u_{n}(x_{1}, \ldots, x_{n})=0.
\end{equation}
His results are only \emph{generically}
true; it is easy, as we will see, to make the requested hypotheses
explicit. First, he implicitly assumes that the truncated determinant does not
vanish. In order to provide rigorous statements, we need to translate
his findings within the framework of some formalism that did
not exist at this time. We will use here an elementary approach that
relies on  diffiety theory  \cite{OllivierSadik2006a}.

\begin{definition}
\label{truncated} For short, we say that $\lambda_{i}$ is a
\emph{canon} for the order 
matrix $(a_{i,j})$, if $(a_{i,j}+\lambda_{i})$ is a canon, i.e.\ a
table possessing a maximal set of transversal maxima.

Let $\ell_{i}$ be the minimal canon of the order matrix
$(a_{i,j})$\footnote{That we may compute using Jacobi's algorithm, see
subsection \ref{algo}.},
with $a_{i,j}=\ord_{x_{j}} u_{i}$, $\Lambda=\max_{i}
\ell_{i}$, $\alpha_{i}=\Lambda -\ell_{i}$ and
$\beta_{j}=\max_{i} a_{i,j}-\alpha_{i}$. The \emph{truncated jacobian
matrix} $\nabla_{u}$ is the matrix $\left(\partial
u_{i}/\partial x_{j}^{(\alpha_{i}+\beta_{j})}\right)$.

We will say that an ordering $<$ on derivatives is a \emph{Jacobi
  ordering} for the system \ref{systeme} if
  $k_{1}-\beta_{1}<k_{2}-\beta_{2}$ implies
  $x_{i_{1}}^{(k_{1})}<x_{i_{2}}^{(k_{2})}$.

Jacobi calls an \emph{explicit canonical form} a system
$$
x_{i}^{(e_{i})} =f_{i}(x),\quad 1\le i\le n
$$
where the functions $f_{i}$ only depend on derivatives of $x_{j}$
smaller than $e_{j}$.
\end{definition}

Jacobi's shortest reduction method may be expressed in the following way.

\begin{theorem}\label{shortest-reduction} i) Assume there exist some functions $\tilde X:t\mapsto \tilde
  x_{j}(t)$, $[a,b]\mapsto \R$ that form a solution of the system $u$,
  such that $|\nabla_{u}|\neq0$. Then, there exists a normal form of
  $u$ for a Jacobi ordering, of which $\tilde x$ is solution, that may
  be computed using equation $u_{i}=0$ and its derivatives up to the
  order $\ell_{i}$.

ii) Assume that $\nabla_{u}$ and all its minors of order $n-1$ that do
not contain row $\mu$ have full rank, then there is no normal form of
$u$ that may be computed using only derivatives of $u_{\mu}$ up to an
order strictly less than $\ell_{\mu}$.
\end{theorem}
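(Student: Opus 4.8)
The plan is to work with the infinite prolongation (the diffiety) attached to the system $u$ and to extract a normal form by solving, step by step, for the derivatives $x_j^{(e_j)}$ that the canon prescribes, using only the allowed prolongations of each $u_i$. First I would fix the data: with $\ell$ the minimal canon, $\alpha_i=\Lambda-\ell_i$, $\beta_j=\max_i a_{i,j}-\alpha_i$, the defining property of a canon says there is a permutation $\sigma$ with $a_{i,\sigma(i)}+\ell_i$ maximal in column $\sigma(i)$, equivalently $a_{i,\sigma(i)}=\alpha_i+\beta_{\sigma(i)}$ and $a_{i,j}\le\alpha_i+\beta_j$ for all $j$. Hence the $\beta_j$-th prolongation $u_i^{(\ell_i)}$, for each $i$, has order exactly $\alpha_i+\beta_j+\ell_i-\alpha_i=\beta_j+\ell_i$ in $x_j$ — wait, more carefully: differentiating $u_i$ a total of $\ell_i$ times (so the top prolongation available under the budget) raises the order in $x_j$ from $a_{i,j}$ to $a_{i,j}+\ell_i\le\alpha_i+\beta_j+\ell_i=\Lambda+\beta_j$, with equality exactly when $j=\sigma(i)$. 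So all $n$ equations $u_i^{(\ell_i)}=0$, after differentiating $u_i$ exactly $\ell_i$ times, are polynomial relations in the variables $x_j^{(k)}$ with $k\le\Lambda+\beta_j$, and each involves $x_{\sigma(i)}^{(\Lambda+\beta_{\sigma(i)})}$ genuinely.

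For part (i) I would then invoke the implicit function theorem along the solution $\tilde x$. The Jacobian of the map $(u_1^{(\ell_1)},\dots,u_n^{(\ell_n)})$ with respect to the $n$ top variables $\bigl(x_{\sigma(1)}^{(\Lambda+\beta_{\sigma(1)})},\dots,x_{\sigma(n)}^{(\Lambda+\beta_{\sigma(n)})}\bigr)$ is, up to reindexing columns by $\sigma$ and up to lower-order contributions that one checks do not appear at top order, exactly the truncated jacobian $\nabla_u=\bigl(\partial u_i/\partial x_j^{(\alpha_i+\beta_j)}\bigr)$ — this is the point of the truncation: the entry $\partial u_i^{(\ell_i)}/\partial x_j^{(\Lambda+\beta_j)}$ equals $\partial u_i/\partial x_j^{(a_{i,j})}$ when $a_{i,j}=\alpha_i+\beta_j$ and is $0$ otherwise. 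Since $|\nabla_u|\neq0$ at $\tilde x$, the IFT lets me solve for each top variable $x_j^{(e_j)}$ with $e_j:=\Lambda+\beta_j$ as a function $f_j$ of the remaining, lower, derivatives. Choosing a Jacobi ordering guarantees precisely that every derivative $x_i^{(k)}$ with $k-\beta_i<e_j-\beta_j$, i.e. appearing in $f_j$, is $<x_j^{(e_j)}$, so the resulting system $x_j^{(e_j)}=f_j(x)$ is an explicit canonical form in Jacobi's sense and $\tilde x$ is a solution of it; and by construction it used $u_i$ and its derivatives only up to order $\ell_i$.

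For part (ii), the impossibility half, I would argue by a dimension/transcendence-degree count on the diffiety. If one could produce a normal form using $u_\mu$ differentiated at most $\ell_\mu-1$ times and the other $u_i$ at most $\ell_i$ times (more than $\ell_i$ is never needed once $|\nabla_u|\neq 0$, and using more of the others cannot compensate — here one needs the hypothesis that every $(n-1)$-minor of $\nabla_u$ omitting row $\mu$ is nonzero, so that the $n-1$ equations other than the $\mu$-th, fully prolonged, still solve for $n-1$ of the top variables), then the total number of independent leading relations available is strictly smaller than $n$ at the critical jet level, so one top variable $x_{\sigma(\mu)}^{(e_{\sigma(\mu)})}$ cannot be expressed — contradicting the definition of a normal form, which must resolve every principal derivative. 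Making this rank count precise is the crux: I expect the main obstacle to be showing that the relations coming from the under-differentiated $u_\mu$ together with the fully prolonged $u_i$ ($i\neq\mu$) have, at the jet order $e_j$, a rank matrix whose relevant maximal minors are exactly the $(n-1)$-minors of $\nabla_u$ avoiding row $\mu$ — i.e. that the minimality of the canon $\ell$ forces the budget deficit at row $\mu$ to translate into a genuine rank drop that no amount of extra prolongation of the other equations can repair. The combinatorial input is the minimality of $\ell$ (Jacobi's algorithm output), which one uses exactly as in the algorithm's correctness proof to see that dropping $\ell_\mu$ by one removes a column of $\nabla_u$'s worth of top-order information that the canon structure shows cannot be recovered elsewhere.
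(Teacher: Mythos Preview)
Your treatment of part (i) has a real gap. You apply the implicit function theorem only to the $n$ top-level prolongations $u_i^{(\ell_i)}=0$, solving for the $n$ derivatives $x_j^{(\Lambda+\beta_j)}$. The resulting system $x_j^{(\Lambda+\beta_j)}=f_j(x)$ is an explicit canonical form in the formal sense and $\tilde X$ does satisfy it, but it is \emph{not} a normal form of $u$: it is a strictly weaker system. Take for instance $u_1=x_1''+x_2''$, $u_2=x_2$. Here $\ell_1=0$, $\ell_2=2$, $\Lambda=2$, $\beta_1=\beta_2=0$; your recipe uses only $u_1$ and $u_2''=x_2''$, producing $x_1''=0$, $x_2''=0$, a system of order $4$ which admits the solution $x_1=0$, $x_2=t$ that violates $u_2=0$. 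The constraint $x_2=0$ has simply been thrown away. In general, your $e_j=\Lambda+\beta_j$ yield $\sum_j e_j=n\Lambda+\sum_j\beta_j>J$ whenever some $\ell_i>0$, so the system you obtain can never be equivalent to $u$.

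The paper repairs this by working with the \emph{entire} block $\{u_i^{(k)}:1\le i\le n,\ 0\le k\le\ell_i\}$, a system of $n+\sum_i\ell_i$ equations, and solving simultaneously for the set $E=\{x_i^{(\alpha_i+\beta_i+k)}:0\le k\le\ell_i\}$ of the same size, leaving exactly the parametric set $S=\{x_j^{(k)}:k<\alpha_j+\beta_j\}$. The step you are missing is a preliminary reordering of rows and columns so that the $\alpha_i$ are increasing and all principal minors $D_1,\dots,D_n$ of $\nabla_u$ are nonzero; the Jacobian of the big prolonged system with respect to $E$ then has determinant equal to a product of powers of the $D_k$, and the IFT delivers the normal form $x_i^{(\alpha_i+\beta_i)}=f_i$, of order $\sum_i(\alpha_i+\beta_i)=J$, genuinely equivalent to $u$ near $\tilde X$.

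For part (ii), your sketch effectively assumes $\lambda_i\le\ell_i$ for $i\neq\mu$, disposing of the general case with ``using more of the others cannot compensate.'' But that is exactly the claim to be proved, and it is where the minor hypothesis does its work. The paper does not make this restriction: it sets $s=\max_i(\lambda_i-\ell_i)$ (possibly positive), locates a normal-form equation $x_{j_0}^{(\beta_{j_0}+\Lambda+s)}=g(x)$ at the top Jacobi level $\Lambda+s$, observes that $g$ cannot involve any $x_j^{(\beta_j+\Lambda+s)}$ (since every $e_j\le\beta_j+\Lambda+s$), and from this extracts the vanishing of a specific $(n-1)\times(n-1)$ minor of $\nabla_u$. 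Your rank-deficiency intuition is pointed in the right direction, but the argument must be run at the shifted level $\Lambda+s$, not at $\Lambda$.
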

\begin{proof} i) We may reorder the unknowns $x_{j}$ and the equations
  $u_{i}$, so that the sequence $\alpha_{i}$ of
  definition~\ref{truncated} is increasing and the $n$ principal
  minors of $\nabla_{u}$ have non vanishing determinants. Let $D_{k}$
  be the determinant of the $k^{\rm th}$ minor. Consider the jacobian
  matrix of the system $\{u_{i}^{(k)}|1\le i\le n,\>0\le
  k\le\ell_{i}\}$, with respect to the set of derivatives
  $E:=\{x_{i}^{(\alpha_{i}+\beta_{i}+k)}|1\le i\le n,\>0\le
  k\le\ell_{i}\}$. Its determinant is a product of powers of the
  $D_{i}$ and so is not vanishing. We can then use the implicit
  functions theorem and find, on a suitable open set containing
  $(\tilde x_{1}(0)$, \dots, $\tilde
  x_{1}^{(\alpha_{1}+\beta_{1}+\ell_{1})}$, \dots,
  $\tilde x_{1}(0)$, \dots, $\tilde x_{n}^{(\alpha_{n}+\beta_{n}+\ell_{n})})$,
functions 
  expressing the derivatives of $E$, depending on the derivatives of
  the set $S:=\{x_{i}^{(k)}|1\le i\le n,\>0\le
  k<\alpha_{i}+\beta_{i}\}$. Hence we get a normal form:
$$
x_{i}^{(\alpha_{i}+\beta_{i})}= f_{i}(x),
$$
of which $\tilde X$ is a solution.
The order associated to this normal form is a Jacobi ordering.

ii) Assume that there exists a normal form that may be computed using
derivatives of $u_{i}$ up to order $\lambda_{i}$, and that
$\lambda_{\mu}<\ell_{\mu}$. Let $s:=\max_{i=1}^{n}
\lambda_{i}-\ell_{i}$. As $\nabla_{u}\neq0$, we see that there is an
element in the normal form depending of some
$u_{i_{0}}^{(\lambda_{i_{0}})}$ with $\lambda_{i_{0}}-\ell_{i_{0}}=s$, and
of the form $x_{j_{0}}^{(\beta_{j_{0}}+\Lambda+s)}=g(x)$, where
$\Lambda=\max_{i}\ell_{i}$. Remaining elements in the normal form have
left side derivatives $x_{j}^{(\beta_{j}+\Lambda+s')}$, with
$s'\le s$, so $g$ does not
depend on derivatives $x_{j}^{(\beta_{j}+\Lambda+s)}$: this
implies that the minor of $\nabla_{u}$ obtained by supressing row
$i_{0}$ and column $j_{0}$ is not of full rank, a contradiction.
\end{proof}

Jacobi gave no proof for these statements; the style of the article
\cite{Jacobi2} and of most of his manuscripts on the subject is that
of a mathematical cook-book: in the best case, proofs are reduced to a
short sketch. Ritt surmised \cite{Ritt35b} that the bound was
suggested to Jacobi by such considerations on normal form
computation. This natural assumption is confirmed by Jacobi's claim
\cite{Jacobi1} that his normal form reduction method provides an
alternative proof of the bound. We easily see, according to the shape
of the normal form above, that the order of $u$ is $\sum_{i=1}^{n}
\alpha_{i}+\beta_{i}=J$.

%

Jacobi also claims that there are as many normal forms of this kind as
there are permutations $\sigma$ such that $\sum_{i=1}^{n}
a_{i,\sigma(i)}=J$. This is perhaps his  single claim that does not stand, even under
suitable genericity hypotheses, as shown by the example
$x_{1}''+x_{2}''+x_{3}''=0$, $x_{2}'=0$, $x_{2}+x_{3}=0$: we only have
one possible permutation, but two possible normal forms for shortest
reductions: $x_{1}''=-x_{2}''-x_{3}''$, $x_{2}'=0$, $x_{3}=-x_{2}$ and
$x_{1}''=-x_{2}''-x_{3}''$, $x_{3}'$=0, $x_{2}=-x_{3}$.

If we want to compute a normal form using a different kind of
orderings, we may need to differentiate the defining equations a greater
number of times. Jacobi
provided bounds for the computation of resolvents.

\subsection{Resolvent computation}
\label{resolvent}
In \S~4 of the second article \cite{Jacobi2}, Jacobi investigates the computation of \emph{
resolvents}\footnote{This word is not used by Jacobi, we borrow it from
{\it Diff.~alg.} \cite{Ritt50} p.~41}, i.e.\ of normal forms such that
one equation of order $J$,
$x_{i_{0}}^{(J)}=f_{i_{0}}(x_{i_{0}})$, only depends on a single
indeterminate $x_{i_{0}}$ and the remaining ones, $x_{i}=f_{i}(x_{i_{0}})$, $i\neq
i_{0}$, express the other variables as functions of $x_{i_{0}}$ and
its derivatives of order lower than $J$. He
writes: \emph{``As mathematicians use to consider such kind of normal forms
before others, I will indicate how many times each of the proposed
differential equations $u_{1}=0$, $u_{2}=0$, \dots $u_{n}=0$ are to be
differentiated in order to make appear auxiliary equations necessary
for that reduction.''} Jacobi's presentation requires the implicit
hypothesis $|\nabla_{u}|\neq0$, but his results stand in the more
general situation of quasi-regular systems\cite{Ollivier2009}. 
\smallskip

Let $A:=(a_{i,j})$ be defined as in the introduction, Jacobi starts
with the new matrix $A'$ defined by adding to each row of $A$ the
corresponding numbers $\ell_{i}$ defined above. He assumes that a set
of transversal maxima is chosen in the canon $A'$; he denotes
transversal maxima with asterisks and calls them \emph{stared terms},
and he underlines the terms being equal to the stared maximum located in their
column. He says that row $i_{1}$ is \emph{attached (annexa) to
row $i_{0}$} if it contains a stared term equal to some underlined
term in row $i_{0}$ or in some row attached to row $i_{0}$. The
row $i_{0}$ is implicitely assumed to be attached to itself. If not
all rows are attached to row $i_{0}$, he increases all the rows
attached to it of the same minimal number that makes new underlined
elements to appear and new rows to be
attached to row $i_{0}$. The process is to be continued until all
rows are attached to it. We get a new matrix $A''$; the last step is
to increase all its elements of a same number that makes the stared
term in row $i_{0}$ become equal to the order $J$ of the system $u$:
we get a new matrix $A^{\prime\prime\prime}$, which is obtained by
increasing the rows of $A$ by a sequence of numbers
$h_{i}$, $1\le i\le n$, that will be the researched orders of derivation.

\begin{example}
We consider a simple system $x_{1}''-x_{2}'=0$, $x_{2}''-x_{3}=0$ and
$x_{3}'-x_{2}=0$. For this system, the order matrix 
$$
A=A'=\left(\begin{array}{ccc}2^{\ast}&1\pa&-\infty\cr
  -\infty&2^{\ast}&0\cr-\infty&0\pa&1^{\ast}\end{array}\right) 
$$
is already a canon. It is not possible to construct a resolvent
representation using $x_{2}$ or $x_{3}$, but it is possible with
$x_{1}$. There is no row attached to the first one. Increasing it by $1$,
row $2$ becomes attached to row $1$. Then, increasing rows $1$ and $2$ by
$1$, row $3$ becomes attached to row $1$. So, we get the matrix
$$
A''= \left(\begin{array}{ccc}4^{\ast}&\underline{3}\pa&-\infty\cr
  -\infty&3^{\ast}&\underline{1}\pa\cr-\infty&0\pa&1^{\ast}\end{array}\right). 
$$
The order $J$ of the system is $5$ and we need to increase all terms
by 1, so that the stared term of row $1$ be made equal to $J$. We get the
new matrix
$$
A^{\prime\prime\prime}= \left(\begin{array}{ccc}5^{\ast}&\underline{4}\pa&-\infty\cr
  -\infty&4^{\ast}&\underline{2}\pa\cr-\infty&1&2^{\ast}\end{array}\right). 
$$
To obtain $A^{\prime\prime\prime}$, one has increased row $1$ of $A$
by $3$, row $2$ by $2$ and row $3$ by $1$, so one needs to
differentiate the first equation $3$ times, the second $2$ times and the
third $1$ time to compute a resolvent representation for $x_{1}$:
$x_{1}^{(5)}= x_{1}''$, $x_{2}= x_{1}^{(4)}$,
$x_{3}=x_{1}^{\prime\prime\prime}$. 
\end{example}

\begin{theorem}
Assume that $|\nabla_{u}|\neq 0$ and that $x_{j}$ is a differential
primitive element\footnote{This means that a resolvent exists for that
  element, see Cluzeau and Hubert \cite{CluzeauHubert2003} for details.}, then a
resolvent representation of $u$ using $x_{j_{0}}$ may be computed
using derivatives of equation $u_{i}$ up to order $h_{i}$.
\end{theorem}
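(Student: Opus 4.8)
The plan is to proceed exactly as in the proof of Theorem~\ref{shortest-reduction}~(i): prolong the system, exhibit a set of derivatives with respect to which the jacobian of the prolonged system is invertible at a solution, and read the resolvent off the implicit function theorem. Fix the canon $A'=(a_{i,j}+\ell_{i})$ together with the chosen transversal maxima and let $i_{0}$ be the row whose transversal maximum sits in column $j_{0}$. The attachment process turns $A'$ into $A^{\prime\prime\prime}=(a_{i,j}+h_{i})$, in which the stared term of row $i_{0}$ equals $J$ and is still the transversal maximum of column $j_{0}$; hence $a_{i,j_{0}}+h_{i}\le J$ for all $i$, with equality only for $i=i_{0}$. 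Put $c_{j}:=\max_{i}(a_{i,j}+h_{i})$, let $\Sigma:=\{u_{i}^{(k)}\mid 1\le i\le n,\ 0\le k\le h_{i}\}$ be the prolonged system, and split the derivatives it involves as $S\sqcup E$, where $S:=\{x_{j_{0}}^{(m)}\mid 0\le m\le J-1\}$ and $E$ is the remainder (so $E$ contains $x_{j}^{(m)}$ for $j\ne j_{0}$, $0\le m\le c_{j}$, and $x_{j_{0}}^{(m)}$ for $J\le m\le c_{j_{0}}$).

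The crux is the purely combinatorial statement that the attachment process is exactly the bookkeeping making the two counts agree, $\#\Sigma=\sum_{i=1}^{n}(h_{i}+1)=\#E$, and arranging the top-order derivatives occurring in $\Sigma$ into a staircase. I would prove this by the same path/class reasoning with which Jacobi shows that the $\ell_{i}$ are minimal --- essentially the only place in his papers where he argues in full, cf.\ subsection~\ref{algo}: a row is attached to $i_{0}$ iff its transversal maximum is reachable from row $i_{0}$ through chains of equalities with underlined terms, and the integer added to such a row is the least one keeping these chains coherent while pushing the term of row $i_{0}$ up to $J$. The hypothesis that $x_{j_{0}}$ is a differential primitive element enters precisely here: it is what forces termination, i.e.\ every row eventually attached to $i_{0}$. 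Indeed, if some row stayed forever unattached --- as for $x_{2}$ in the example above, where a block of variables is missing from a subset of the equations --- the corresponding variables could not be expressed through $x_{j_{0}}$ alone and $x_{j_{0}}$ would not be primitive; conversely, once the process has terminated the partition $S\sqcup E$ exhausts the derivatives occurring in $\Sigma$, so any normal form solved for $E$ in terms of $S$ is a resolvent in $x_{j_{0}}$.

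The rest parallels Theorem~\ref{shortest-reduction}~(i). As $x_{j_{0}}$ is primitive there is a (generic) solution $\tilde X$ of $u$; I would relabel the unknowns and equations other than $x_{j_{0}}$, $u_{i_{0}}$ so that the minors of the truncated jacobian matrix $\nabla_{u}$ of Definition~\ref{truncated} appearing below are non-vanishing --- possible since $|\nabla_{u}|\ne0$ --- and then compute the jacobian of $\Sigma$ with respect to $E$ at $\tilde X$. By the staircase shape produced in the combinatorial step, after reordering its rows and columns this jacobian is block triangular with diagonal blocks that are minors of $\nabla_{u}$; one expects the extra differentiations, $h_{i}$ instead of $\ell_{i}$, to fill every band to full width, so that the determinant is a positive power of $|\nabla_{u}|$, any partial minor surviving at the two ends of the staircase being disposed of by the relabelling just made. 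The determinant being non-zero, the implicit function theorem expresses, near $\tilde X(0)$, every derivative of $E$ as a function of the derivatives of $S=\{x_{j_{0}},\dots,x_{j_{0}}^{(J-1)}\}$; retaining the equations whose left-hand sides are $x_{j}$ ($j\ne j_{0}$) and $x_{j_{0}}^{(J)}$ yields $x_{j}=f_{j}(x_{j_{0}})$ and $x_{j_{0}}^{(J)}=f_{j_{0}}(x_{j_{0}})$, a resolvent of $u$ in $x_{j_{0}}$; it uses $u_{i}$ only up to its $h_{i}$-th derivative, and, the order of $u$ being $\sum_{i}(\alpha_{i}+\beta_{i})=J$, this resolvent has order $J$ as it must.

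The main obstacle is the combinatorial lemma of the second paragraph --- that the $h_{i}$ delivered by the attachment process make $\#\Sigma=\#E$ and yield a genuinely triangular jacobian --- together with the precise statement that primitivity of $x_{j_{0}}$ amounts to termination of the process. Everything downstream (the jacobian being a product of minors of $\nabla_{u}$, the implicit function step, the identification of the output as a resolvent, the equality of orders) is routine and mirrors Theorem~\ref{shortest-reduction}~(i). A lesser technical point is that no solution is prescribed in the statement, so one must pass to a generic point of $u$, or recast the implicit function step as linear algebra over the differential field generated by a generic solution; and, as noted in \cite{Ollivier2009}, the argument extends beyond $|\nabla_{u}|\ne0$ to quasi-regular systems, which is its natural setting.
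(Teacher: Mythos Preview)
Your one-shot implicit-function approach is genuinely different from the paper's, and both the counting $\#\Sigma=\#E$ and your reading of primitivity of $x_{j_{0}}$ as termination of the attachment process are correct. The gap is exactly where you place it, but it is a real one: the jacobian of $\Sigma$ with respect to $E$ is \emph{not} block-triangular with square diagonal blocks in the way the proof of Theorem~\ref{shortest-reduction}~(i) would suggest. In that proof the unknowns at level $r$ from the top are the $x_{i}^{(\Lambda+\beta_{i}-r)}$, indexed by the \emph{same} set $\{i:\ell_{i}\ge r\}$ as the equations $u_{i}^{(\ell_{i}-r)}$, so every diagonal block is a square principal minor $D_{k}$. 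In your setup the unknowns in $E$ run all the way down to $x_{j}^{(0)}$ for each $j\ne j_{0}$ but stop at $x_{j_{0}}^{(J)}$, so column $j_{0}$ drops out of the staircase at level $1$ while the equations do not; the level counts cease to agree. In the paper's own $3\times3$ example one has $h=(3,2,1)$ and $c=(5,4,2)$ with $J=5$: the nine equations fall into levels $0,1,2,3$ with multiplicities $3,3,2,1$, while the nine unknowns of $E$ fall into levels $0,\ldots,4$ with multiplicities $3,2,2,1,1$. Below the top block there is no square staircase, your ``product of minors of $\nabla_{u}$'' formula has no content, and a relabelling of rows and columns cannot repair a mismatch of block \emph{sizes}. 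The jacobian is indeed invertible there, but not for the reason you state.

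The paper avoids any global jacobian analysis by a two-step argument. It first invokes Theorem~\ref{shortest-reduction}~(i) to obtain the explicit normal form $w_{i}:x_{i}^{(a_{i,i})}=f_{i}(x)$, costing derivatives of $u_{i}$ up to order $\ell_{i}$; it then builds the resolvent by differentiating $w_{i_{0}}$ up to order $J-a_{i_{0},i_{0}}$ and, each time some $x_{j}^{(a_{j,j})}$ with $j\ne i_{0}$ surfaces on the right-hand side, substituting $f_{j}(x)$ for it. The proof is the bookkeeping of this substitution cascade: one shows recursively that if row $i$ becomes attached to row $i_{0}$ only after the latter has been raised by $e$ in the attachment process, then $w_{i_{0}}^{(e+\ell_{i_{0}})}$ can be produced from derivatives of $u_{i}$ of order at most $\ell_{i}$, whence $w_{i_{0}}^{(J-a_{i_{0},i_{0}})}$ needs $u_{i}$ only up to order $J-a_{i_{0},i_{0}}-(e+\ell_{i_{0}})+\ell_{i}=h_{i}$. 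The attachment process is thus the combinatorial shadow of a substitution tree inside an already-explicit normal form, not of a jacobian staircase; no invertibility question ever arises. Your route would be cleaner if the block structure held, but as written it shifts the entire burden onto a non-singularity claim whose most natural proof is, in effect, the paper's substitution argument.
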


Jacobi gave no proof for this result; we propose for the reader's
convenience the following elementary one, that follows Jacobi's
presentation. 

\begin{proof}
Using theorem~\ref{shortest-reduction}, we may assume that the system
admits a normal form of the shape
$x_{i}^{(a_{i,i})}=f_{i}(x)$, that can be obtained using
derivatives of $u_{i}$ up to order $\ell_{i}$ at most. Let us denote
by $w_{i}$ the $i^{\rm th}$ equation of this normal form. We may make
a more precise evaluation of the order of derivation requested to
compute $w_{i_{0}}$. If $x_{i}^{(\alpha_{i_{0}}+\beta_{i})}$ appears
in $u_{i_{0}}$, then we will need the $(\ell_{i}-\ell_{i_{0}})^{\rm
th}$ derivative of $w_{i}$ to compute $w_{i_{0}}$. Then, the row $i$
is attached to the row $i_{0}$ in $A'$. We may recursively prove
that, if the $(\ell_{i}-\ell_{i_{0}})^{\rm
th}$ derivative of $u_{i}$ is needed, then the row $i$
is attached to the row $i_{0}$ in $A'$. More precisely, if we need to
increase the row $i_{0}$ of $A'$ by $s\le \ell_{i}-\ell_{i_{0}}$, so that row $i$
becomes attached to it, we only
need to differentiate $u_{i}$ up to order $\ell_{i}-\ell_{i_{0}}-s$ in
order to compute $w_{i_{0}}$.

As $|\nabla_{u}|\neq 0$, the order of the system is $J$ and for
computing a resolvent, we need the first $J$ derivatives of
$x_{i_{0}}$; we shall differentiate equation $w_{i_{0}}$, which is of
order $a_{i_{0},i_{0}}$ in $x_{i_{0}}$, up to order
$J-a_{i_{0},i_{0}}$. At the beginning of the process and after each
step of differentiation, if some $x_{j}^{a_{j,j}}$, $j\neq i_{0}$
appears in the right side, we may substitute to it the expression
$f_{j}(x)$, and repeat the process until no such derivative
appears\footnote{This idea appears in some unpublished manuscripts of
Jacobi, e.~g.\ in {II/23 a)} p.~2217.a: \emph{``et simulac in dextra parte
obvenit variabilis $x_{[j]}$ differentiale [$a_{j,j}$]$^{tum}$, eius e
[$u$] substituo valorem [$f_{j}$].''} (Mathematical notations between
square brackets have been changed to correspond with those used in our
proof.) See our translation \cite{Jacobi2e} p.~37.}. If the row $i$ of $A'$ is attached to row $i_{0}$, then we
may compute the derivative of order $\ell_{i_{0}}$ of $w_{i_{0}}$
using the derivatives of $u_{i}$ of order $\ell_{i}$ at most, and so
we may compute a resolvent using the derivatives of $u_{i}$ of order
at most $J-(a_{i_{0},i_{0}}+\ell_{i_{0}})+\ell_{i}=h_{i}$. If the row
$i$ in $A'$ becomes attached to row $i_{0}$ after this last row has
been increased of $e$, then we may compute the derivative of
$w_{i_{0}}$ of order $e+\ell_{i_{0}}$ using the derivatives of $u_{i}$ up to order
at most $\ell_{i}$. So, we may get the derivative of order
$J-a_{i_{0},i_{0}}$ of $w_{i_{0}}$, requested to get the
resolvent, using derivatives of $u_{i}$ up to order  at most
$J-a_{i_{0},i_{0}}-(e+\ell_{i_{0}})+\ell_{i}=h_{i}$, hence the
result.
\end{proof}

\begin{remark}
It is easily seen that the maximal possible value for $h_{i}$ is
$J-a_{i_{0},i_{0}}+\ell_{i}-\ell_{i_{0}}=J-(\beta_{i_{0}}+\Lambda)+\ell_{i}$. If
the equation $u_{i}$ has order $e_{i}$, the sum of the $h_{i}$, that
is the number of equations in the system one needs to solve to compute
a resolvent, is maximal when $e_{i}$ is the order of $u_{i}$ in all
the variables: $h_{i}$ is at most $\sum_{i'\neq i} e_{i'}$ and
$\sum_{i=1}^{n} h_{i}\le (n-1)\sum_{i=1}^{n}e_{i}$. On the other hand,
assume that, after some reordering, the rows are listed by successive
order of ``attachment'' to row $i_{0}=1$.  We have
$h_{1}=\sum_{i>1}a_{i,i}$ and for $i>1$, $h_{i}\ge
\sum_{k>i}a_{k,k}$, so that $\sum_{i=1}^{n}
h_{i}\ge\sum_{i>1}(i-1)a_{i,i}$.
\end{remark}

\begin{remark}
In more general situations, that is when $|\nabla_{u}|= 0$, one may need to
differentiate the defining equations a greater number of times, that
may also depend on the degree of the equations. But for quasi-regular
systems \cite{Kondratieva2009,OllivierSadik2006a}, 
Jacobi's bound for the $h_{i}$ still stands\cite{Ollivier2009}.
\end{remark}
\medskip

Jacobi did not stop at this stage; he has also provided the
following elegant version of his result.

\begin{theorem}
\label{forma-elegans}
The order $h_{i}$ up to which one needs to differentiate equation
$u_{i}$ in order to compute a resolvent representation for $u$ using
$x_{i_{0}}$ as a primitive element, is equal to the maximal
transversal sum of the $(n-1)\times(n-1)$ matrix obtained by removing
from $A$ line $i$ and column $i_{0}$.
\end{theorem}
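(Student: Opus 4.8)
The plan is to prove this as a purely combinatorial identity about the integers $h_i$ produced by Jacobi's ``attachment'' procedure, forgetting the differential content entirely. Denote by $J^{i_0}_i$ the maximal transversal sum of the $(n-1)\times(n-1)$ submatrix of $A$ obtained by deleting row $i$ and column $i_0$. Reordering the unknowns, I may assume the transversal maxima chosen in the canon $A'=(a_{i,j}+\ell_i)$ lie on the diagonal; since for a canon the maximal transversal sum equals the sum of the column maxima, and since passing from $A$ to $A'$ shifts every transversal sum by $\sum_i\ell_i$, this gives $\sum_k a_{k,k}=J$.

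Next I would read off the structure of the final matrix $B:=(a_{i,j}+h_i)$ (Jacobi's $A^{\prime\prime\prime}$). Each step increases the currently attached rows by the least $\delta$ that makes a new entry reach its column maximum; as $\delta$ is at most the slack $c_{k,k}-c_{j,k}$ in every relevant inequality (here $c$ is the current matrix, $j$ attached, $k$ not), the diagonal stays a set of transversal maxima throughout, and the new equalities are exactly the new attachments. Moreover each attachment, once made, is witnessed by an edge $j\to k$ with $c_{j,k}=c_{k,k}$ whose two endpoints are thereafter increased together, so it stays tight in $B$. Hence $B$ satisfies $b_{i,i}\ge b_{j,i}$ for all $i,j$; the final uniform shift gives $b_{i_0,i_0}=J$, i.e. $h_{i_0}=\sum_k a_{k,k}-a_{i_0,i_0}$; and, once the procedure terminates with all rows attached, every index $i$ is joined to $i_0$ by a simple \emph{tight path} $i_0=p_0\to p_1\to\dots\to p_r=i$ with $b_{p_m,p_{m+1}}=b_{p_{m+1},p_{m+1}}$ for all $m$.

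Both inequalities between $h_i$ and $J^{i_0}_i$ then come from one telescoping computation. Summing $b_{p_m,p_{m+1}}=b_{p_{m+1},p_{m+1}}$ along a tight path and cancelling the $h$'s gives $\sum_{m=0}^{r-1}a_{p_m,p_{m+1}}=\sum_{m=1}^{r}a_{p_m,p_m}+h_i-h_{i_0}$; with $h_{i_0}=\sum_k a_{k,k}-a_{i_0,i_0}$ this rearranges to $h_i=\sum_{k\notin T}a_{k,k}+\sum_{m=0}^{r-1}a_{p_m,p_{m+1}}$, where $T=\{p_0,\dots,p_r\}$. The partial permutation $\tau_P$ with $\tau_P(p_m)=p_{m+1}$ for $m<r$ and $\tau_P(k)=k$ for $k\notin T$ is a bijection from $\{1,\dots,n\}\setminus\{i\}$ onto $\{1,\dots,n\}\setminus\{i_0\}$ of value $h_i$, so $h_i\le J^{i_0}_i$. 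For the reverse inequality, take any transversal $\tau$ of the submatrix and extend it to a permutation $\sigma$ of $\{1,\dots,n\}$ by $\sigma(i)=i_0$; the cycle of $\sigma$ through $i$, written from $i_0$, reads $i_0=p_0\to\dots\to p_r=i\to i_0$, and every other cycle $q_0\to\dots\to q_s\to q_0$ satisfies $\sum_m a_{q_m,q_{m+1}}\le\sum_m a_{q_m,q_m}$ by summing $b_{q_m,q_{m+1}}\le b_{q_{m+1},q_{m+1}}$ around it. Collecting the cycle contributions to $\sum_k a_{k,\sigma(k)}$, subtracting $a_{i,i_0}$, and applying the inequality form of the telescoping on the cycle through $i$, everything collapses to $\mathrm{val}(\tau)\le h_i$; hence $J^{i_0}_i\le h_i$, and therefore $h_i=J^{i_0}_i$. (The case $i=i_0$ is covered: the tight path reduces to $\{i_0\}$, $\tau_P=\mathrm{id}$ on the complement of $i_0$, and the argument is unchanged.)

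The one step calling for real care is the structural claim of the second paragraph: one must verify that Jacobi's verbal description of underlining and attachment really builds the tight graph of $B$ and its reachability set from $i_0$, and that the process terminates with every row attached precisely when $x_{i_0}$ is a differential primitive element (if a block of equations containing $u_{i_0}$ involves none of the other unknowns, the process stalls and no such resolvent exists). Granting (i) $b_{i,i}\ge b_{j,i}$, (ii) $h_{i_0}=\sum_k a_{k,k}-a_{i_0,i_0}$, and (iii) the tight paths, the rest is the same short telescoping identity, run once with equalities — to exhibit a transversal of value $h_i$ — and once with the canon inequalities — to bound every transversal by $h_i$.
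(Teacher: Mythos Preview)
Your proof is correct. The paper itself does not give a proof of this theorem; it only states that ``Jacobi gives a complete proof of the equality between the order $h_{i}$ defined by his process and the maximal transversal sum of the above theorem. It relies on the same kind of argument as the proof of his algorithm,'' and then refers the reader to \cite{Jacobi2}. Your approach is precisely in that spirit: a pure combinatorial argument about canons and transversal maxima, with no differential content.

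The structural claims you flag as ``calling for real care'' are indeed the only places where work is needed, and your justifications are sound. The key invariant---that at every stage the diagonal remains a set of transversal maxima in the current matrix---follows because the increment $\delta$ is chosen minimal, so no off-diagonal entry in a column with unattached starred term overshoots; columns with attached starred term have both diagonal and any attached off-diagonal entry increased equally, and unattached off-diagonal entries only fall further behind. The persistence of tight edges along the attachment tree (each newly attached row comes with a tight edge from an already-attached parent, and both endpoints are increased together thereafter) gives the simple tight paths you need; one can always prune a tight walk to a simple tight path since a repeated vertex $p_{a}=p_{b}$ lets you splice directly, the edge $p_{a}\to p_{b+1}$ being the same as the tight edge $p_{b}\to p_{b+1}$.

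The telescoping computation is clean and does both directions at once, as you say: run with equalities along a tight path it exhibits the partial transversal $\tau_{P}$ of value $h_{i}$, and run with the canon inequalities $b_{j,k}\le b_{k,k}$ around the cycles of an arbitrary extension $\sigma$ it bounds every transversal by $h_{i}$. One small remark: your parenthetical about termination (``the process stalls\dots\ if a block of equations containing $u_{i_{0}}$ involves none of the other unknowns'') goes slightly beyond what the combinatorics alone can see---whether $x_{i_{0}}$ is a primitive element is not a purely structural condition on the order matrix---but this does not affect the identity $h_{i}=J^{i_{0}}_{i}$, which holds whenever the process terminates, and the paper's statement already assumes it does.
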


\noindent Jacobi gives a complete proof of the equality between the order
$h_{i}$ defined by his process and the maximal transversal sum of the
above theorem. It relies on the same kind of argument as the proof of
his algorithm. We refer to his paper  \cite{Jacobi2} for
details\footnote{See our translation \cite{Jacobi2e} p.~62--63.}.

We may remark that a naïve use of this theorem requires $\cO(n^{4})$
operations to compute the $h_{i}$ using Jacobi's or Munkres' algorithm  \cite{Munkres57}. But Jacobi's
process, described above, only requires $\cO(n^{3})$ operations, once
the $\ell_{i}$ have been computed with the same complexity. Jacobi states first an efficient
algorithmic version of his result, before giving an elegant---but
less efficient---mathematical version.
\medskip

These sharp bounds provided by Jacobi for computing normal forms
can be used in a straightforward way to improve many
algorithms developped in recent years. His results on normal form are
especially important for new methods of resolution for algebraic
systems relying on the representation of polynomials as \emph{
  Straight-Line Programs} \cite{GiustiLecerfSalvy1999} that
begin to be extended to differential systems
 \cite{DAlfonso2006,DAlfonsoJeronimoSolerno2006a,DAlfonsoEtAl2008,DAlfonsoEtAl2009}.

\subsection{The bound}
\label{bound-proof}
The work of Jacobi suffers from the lack of a rigorous theory allowing
precise definitions of the mathematical objects he considers. However,
if one has in mind that his goal is to consider physical situations,
that imply implicit hypotheses, his proof of the bound is not so weak
as one may think at first sight.
\medskip

\textsc{Proof of the bound.} --- The proof relies on three succesive
steps. The first is to claim that one can reduce to \emph{linear}
equations. This, of course, will not stand for all systems, but for
those expressing the laws of mechanics or other problems of physical
interest, we can take this for granted. 
\smallskip

From a mathematical standpoint, the most general condition under which
this may be done is the one given by Johnson \cite{Johnson1978},
expressing that a differential system is in some way
``regular''\footnote{This was to be developped later by Kondratieva
\emph{et al.}
 \cite{Kondratieva1982b,Kondratieva2009,OllivierSadik2006a}.}. Under 
such hypotheses, one may from a system $u_{i}=0$ build a new system
$\delta u_{i}=0$, which is nothing else than what is described by
Johnson as ``Kähler differentials'' \cite{Johnson1969}. Let
$\tilde X:t\mapsto \tilde x(t)$ be a solution of $u$. It is a
\emph{quasi-regular} solution 
of $u$ at $t_{0}$ if for all $r\in\N$, when substituting to any
derivative $x_{j}^{(k)}$ the value $\tilde x_{j}^{(k)}(t_{0})$, the
jacobian matrix of the system
$u, \ldots, u^{(r)}$ has full rank $n(r+1)$. Then the jet of $\tilde X$ at
$t_{0}$ is a regular point of the subspace $V$ of
$\bJ^{\infty}(\R,\R^{n})$ defined by $u$ and the order of $u$, that is
also the dimension of $V$, is equal to the dimension of its tangent
space, defined by $\delta u$.

It is easily seen that, if $\nabla_{u}\neq 0$ for $\tilde X$, then
$\tilde X$ is quasi-regular. We say ``quasi-regular'', for some
``singular solutions'', according to Ritt's teminology, such as $x=0$
for the equation $(x')^{2}-4x=0$, may satisfy the Johnson condition.

Jacobi uses in fact a stronger result, but without proof, claiming
that if the system $u$ has order $e$ and if its general solution
depends on $e$ arbitrary constants $a_{1}$, \dots, $a_{e}$, then the
set $\partial x/\partial a_{i}$ is a basis of solutions of the linear
system $\delta u$. I don't know if it was ``well known'' at that time.
\smallskip

The next step is more surprising, for Jacobi claims, with no
justification, that it is enough to consider a linear system with
constant coefficients. There is a beginning of proof, striked out by
Jacobi on page 2203.a of manuscript {II/13 b)}: \emph{``In exploring
the order of a system, as one considers only the highest derivatives
in the linear differential equations to which the proposed ones have
been reduced, one may assume that the coefficients are constants. For,
having differentiated the equations [$\delta u=0$] many times, in
order to obtain new equations''}\dots\ This interrupted sentence
suggests that the shortest reduction process \cite{Jacobi2} used to
compute normal forms was the basic idea. 

In fact, provided that
$|\nabla_{u}|$ does not vanish, we only need to consider the
coefficients of the leading derivatives\footnote{We call here a
leading derivative of an equation, a derivative $x_{j}^{(k)}$ such
that $k-\beta_{j}$ is maximal, i.e.\ with a maximal Jacobi order.}
when computing the order: derivatives of the coefficients will only
affect smaller derivatives of the variables. If $|\nabla_{u}|$
vanishes, assume that the equations $u_{i}$ are sorted by increasing
$\alpha_{i}$ and that $i_{0}$ is the smallest integer such that the
first $i_{0}$ rows of $\nabla_{u}$ are linearly dependent, satifying
$\sum_{i=1}^{i_{0}} c_{i}l_{i}=0$, then we may in $\delta u$ replace
$\delta u_{i_{0}}$ by $\sum_{i=1}^{i_{0}-1} c_{i} (\delta
u_{i})^{(\ell_{i}-\ell_{i_{0}})}$, as $c_{i_{0}}\neq0$, this new
system is equivalent to $\delta u$ and the leading derivatives in
$\delta u_{i_{0}}$ have been removed: so the new system has a strictly
smaller Jacobi number $J$. This gives an easy proof by induction of the
result.
\smallskip

We could conclude from such considerations on normal forms of linear
systems, but Jacobi uses a different kind of argument for the last
step of his proof. Having reduced his investigations to the case of a
linear system with constant coefficients, he looks for solutions of
the form $x_{j}=c_{j}{\rm e}^{\lambda t}$. Substituting such an
expression in his linear equations, he gets a system of the form
$\sum_{j=1}^{n} P_{i,j}(\lambda)\,c_{j}=0$, $1\le i\le n$ where
$\deg_{\lambda} P_{i,j}=a_{i,j}$. The number of possible values for
$\lambda$ is the degree of the determinant $|P|$, which is at most
$\max_{\sigma\in S_{n}} \sum_{i=1}^{n} a_{i,\sigma(i)}=J$, with
equality if $|\nabla_{u}|\neq0$. Jacobi does not consider the case of
multiple solutions, etc.\ but there is no difficulties.
\medskip

We cannot know precisely how Jacobi could have detailed his
demonstration. However, we have shown that we can design a complete
proof, using elementary arguments, following the indications he left
in his manuscripts, provided that we retreat to the safe ground of
regular systems. For an account of the research on singular solutions
during the first half of the \textsc{xix}$^{\rm th}$ century, see the
work of Houtain  \cite{Houtain1852}.

\section{The second part of the \textsc{xix}$^{\bf th}$ century}

The first publication of these two papers\cite{Jacobi1,Jacobi2} in
1865 and 1866 and a new publication in 1890 in the volume~V of
Jacobi's complete works did not stimulate further research on the
subject in Germany. The works of his continuators during the
\textsc{xix}$^{\rm th}$ century are very superficial. They did not
seem to have considered the subject could deserve a real mathematical
effort.
\smallskip

Nanson in 1876  \cite{Nanson1876}, considers the linear case
with constant coefficients, in $2$ and $3$ variables. He rejects the
idea of substituting ${x_{i}=c_{i}\rm e}^{\lambda t}$ in the equation,
in order to obtain the order, claiming that one should first compute
the order in order to be sure that a complete solution of that kind
could be obtained. He proceeds heuristically, eliminating one variable
after the other and bounding at each step the orders in each variable
of the equations he gets.
\smallskip

Jordan in 1883  \cite{Jordan1883} considered the non linear situation with
$4$ variables.  He tried to eliminate $x_{2}$, $x_{3}$, $x_{4}$ in
order to compute a resolvent for $x_{1}$, using
arguments that only work in the most general situation. From
heuristic considerations on the number of derivatives to eliminate,
he established theorem \ref{forma-elegans} in four variables: one needs
to differentiate $u_{i}$ a number of times
 equal to $h_{i}$, the maximal
transversal sum of the matrix obtained by removing from the order
matrix $A$ row $i$ and column $1$. So, the order of the resolvent
will be $\max_{i=1}^{4} a_{i,1}+h_{i}=J$. 
\smallskip

Nanson, who referred to Boole \cite{Boole1859} for systems of order $1$ and
Cournot \cite{Cournot1857} for systems of two equations, did
not quote Jacobi. Jordan---who did not quote Nanson---did
not have a full view of Jacobi's work.  He claimed that Jacobi
had given an ``indirect'' proof and that he would give a ``direct'' one. 
\smallskip

The work of Chrystal in 1895  \cite{Chrystal1895} was rigorous, but he
only considered the easy linear case with constant coefficients---Jacobi's
arguments only worked for all different eigenvalues. Ritt, who gave
these references  \cite{Ritt35b} also referred to a paper by Sarminski
(\emph{Communications of the University of Warsaw}, 1902) that I was
unable to find.

\section{Ritt's work}
\label{Ritt}

Ritt, who was known to be fluent in many languages, certainly had a
better view of Jacobi's results. However, a century after Jacobi wrote
them, the style and spirit of mathematics did change. One expects
rigorous proofs, but also intrinsic results, attached to geometrical
objects. One thinks of varieties (or ``components'') and not of
systems: more precisely, in his article \cite{Ritt35b}, published in 1935, a
system means a component. It is remarkable that this change also
concerned a mathematician like Ritt, who knew well some very applied
style of mathematics\footnote{He worked performing computations in the
Naval Observatory in Washington during his studies and helped to
organise a computation group working for the US artillery during World
War~I \cite{Grier2001}.} and whose activity was dominated by the
spirit of ``classicism''\footnote{In Ritt's obituary \cite{Lorch1951} p.~310,
E.R.~Lorch writes \emph{``His media are complex function theory of the
nineteenth century and differential equations. Much of his work could
have been written a half century earlier.''}}.

One interest of Ritt in this subject was to secure results that could
be applied to components intersection and it is not the case for
Jacobi's bound (see \emph{Diff.~Alg.} \cite{Ritt50} p.~138--144). Having developped a
theory that allows to characterize ``singular components'', he
wonders if the bound stands for all components, including those that
do not satisfy natural hypotheses of regularity: a difficult question
that was certainly not considered by Jacobi.
\smallskip

The less convincing part of Jacobi's ``proof'' is to go from time
varying systems to constant coefficients.  Ritt's proof in the linear
case  \cite{Ritt35b} solves the problem. But Ritt, who only considers
 \cite{Ritt35b,Ritt50} elimination orderings does not prove the necessary
and sufficient condition for the bound to be reached, given by the non
vanishing of the truncated Jacobian. Such a condition is, as we have
seen in \ref{bound-proof}, more easily proved with an orderly ordering
for an adapted order defined in this way:
$\widetilde{\ord}_{x_{j}}u_{i}:=\ord_{x_{j}}u_{i}-\beta_{j}$, with
$\beta_{j}$ defined as in def.~\ref{truncated}. Ritt's proof relies on
some simplified method for computing a characteristic set in the
linear case. He proves the \emph{strong bound}, using the convention
$\ord_{x_{j}}u_{i}=-\infty$ if $x_{j}$ and its derivatives do not
appear in $u_{i}$.
\medskip

His 1935 paper  \cite{Ritt35b} concludes with a proof of
the bound for any component of dimension zero of a system of two
polynomial equations $A$ and $B$ in two variables, that is reproduced with a few
modifications in \emph{Diff. alg.} \cite{Ritt50} p.~136--138. One may remark that in
the 1935 article \cite{Ritt35b}, a footnote precises that if one of the variables does
not appear in $A$, the order of $A$ in this variable is $0$. In
 \cite{Ritt35b}, a new footnote claims that we can in fact prove the
strong bound. The requested modifications in the proof seem easy, but
are not given explicitely. In 1935 \cite{Ritt35b}, Ritt refers to Gourin
 \cite{Gourin1933} for the following result: if a zero dimensional
differential ideal $\cI$ contains a zero dimensional differential
ideal $\cJ$, then $\ord\,\cI<\ord\,\cJ$. 

An important argument is not explicitly stated in the proof. Ritt
reduces the situation to the case of two polynomials, $A$ and $E$,
were $E$ depends only on $x_{1}$. He claims that $E$ must effectively
depend on that variable. For this, he needs to use the fact that a
single equation in two variables cannot define a component of
dimension $0$, which is true by \emph{Diff. alg.} \cite{Ritt50} chap.~III \S~1 p.~57.
\medskip

In \emph{Diff.~alg.}\ \cite{Ritt50} p.~139--144, Ritt also proved an important result. He considers
irreducible differential polynomials in two variables, and
investigates the order of the intersection of their \emph{general
  solutions}. He first proves that Jacobi's bound stands if $A$ and
$B$ have order not greater than unity. Then, he exhibits a family of
polynomials of order $r>3$ in $x_{1}$ and $x_{2}$, the general solution
of which intersects the manifold of $x_{1}$ in an irreducible manifold
of order $2r-3$. So, the bound cannot stand for manifolds, but just
for systems.

\section{The assignment problem}
\label{assignment}

In 1944, the R.A.F. tried to optimize the reaffectation of soldiers of
disbanded units \cite{Schrijver05}. No practical solution could be used
before the end of World War~II, but this initiated the first research
on the problem. It was then considered to optimize the affectation of
$n$ workers to $n$ tasks, $a_{i,j}$ representing the productivity of
worker $i$ if affected to task $j$. One looks for a maximum, with the
constraint that two different workers must be given two different
tasks. The Monge problem
 \cite{Monge1781} may be considered as a first, continuous, example
of such problems (how to transport earth from a given area to some
other with the least amount of carriage).
\smallskip

It is not the place to give much details about the discovery of the
Hungarian method by Harold Kuhn in 1955  \cite{Schrijver05}. Anyway, it
may be of interest to consider the situation from the standpoint of
the transmission of mathematical results. Jacobi's algorithm was sleeping in
papers written in a dead language, with titles that cannot be related
to the assignment problem. It also seems that the mathematical
community was not always of a great help for the practitioner who
wanted to solve his optimization problem in a short time. It is
amazing that trying $n!$ possibilities may have been
considered as an acceptable solution, provided that
their number is finite, in the middle of the
\textsc{xx}$^{\rm th}$ century (see Schrijver \cite{Schrijver05}
p.~8). For Jacobi, trying $n!$ solution was not a 
solution at all. He claimed indeed to look for \emph{a solution},
whereas we would rather say that we are looking for an efficient
one. The efficiency issue was at that time---very strongly---implicit.

One may also notice that rediscovering Jacobi's method took more than
10 years, from 1944 to 1955, and that prominent mathematicians such as
John von Neuman considered the problem. It could have been much longer
if Kuhn did not translate from Hungarian Egerváry's paper
\cite{Egervary31} that allowed him to conclude. Inspired by K{\H
o}nig, a pioneer of graph theory\cite{Konig50}, Egerváry considered
the problem as a weighted variant of the maximal matching problem, but
he did not give a polynomial time algorithm
\cite{Juttner2004}. Possibly, Egerváry could have
contributed to the question himself, but it seems that the research
was concentrated in the eastern part of the world, mostly in the
United States. It was also strongly motivated by economical and
organizational issues and one may guess that it did not facilitate
collaborations with eastern scientists during the cold war. Egerváry
heard of Kuhn's algorithm as late as in 1957 and went back to such
matters with two papers on the transportation problem. Tragic
circumstaces interrupted his research. If, after the war, the
Hungarian Academy of Sciences provided him good working conditions
that stimulated his work, he killed himself in 1958, persecuted by
bureaucrats\footnote{K{\H o}nig commited suicide in october 1944, a
few days before Budapest Jews were forced into the ghetto and their
deportation began.}.
\smallskip

Richard Cohn is the first, for the best of my
knowledge, to have made a link between Jacobi's work and the
assignment problem  \cite{Cohn1983}, but the information did not spread into the
optimization community before 2005.

\section{The second part of the \textsc{xx}$^{\bf th}$ century}

The second part of the \textsc{xx}$^{\rm th}$ century is dominated by the work
of Richard Cohn and his students. 

\subsection{Greenspan's bound} Greenspan proved a different bound,
in the framework of difference algebra  \cite{Greenspan1959b} in
1959. It is easily translated in differential algebra: let
$r_{j}=\max_{i=1}^{n}\ord_{x_{j}}u_{i}$ and $\eta_{i}$ be the greatest
integer such that $\ord_{x_{j}}u_{i}^{(\eta_{i})}\le r_{j}$, $1\le j\le
n$, Greenspan's bound is $G:=\sum_{i=1}^{n} r_{j}-\max_{i=1}^{n}
\eta_{i}$. It was proved by Cohn in 1980  \cite{Cohn1980} that the order
of any zero dimensional component of an arbitrary differential system
is bounded by $G$. One may remark that this result implies Jacobi's
bound in two variables, and that it may be proved using an adapted
version of Ritt's proof.

\subsection{Lando's bound}
Barbara Lando proved in 1970  \cite{Lando70} the
\emph{``weak bound''} for order one differential systems. The \emph{``weak bound''}
means that if $x_{j}$ and its derivatives do not appear in $u_{i}$, we
use the convention $\ord_{x_{j}}u_{i}=0$.  This result was translated
in difference algebra  \cite{Lando72} in 1972. \emph{Proving the strong
bound for order one systems would imply the strong bound for any
system, for the strong bound is compatible with the classical
reduction of a system to order one equations.} Lando's proof uses
results on matrices of zeros and ones that are very close to some
theorems of K{\H o}nig and Egerv{\' a}ry.

B.~Lando also proved that any order matrix $A$ is the order matrix of
a system for which the bound is reached.

\subsection{Tomasovic and PDE systems}
There was a attempt to generalize the bound to partial differential
systems  \cite{Tomasovic76}, due to Tomasovic in 1976. Consider a
system of $n$ partial differential equations in $n$ variables and $m$
derivatives. Let $\cP$ be a component of of $\{u\}$ and
$$
\omega_{\cP}(r) = \sum_{i=0}^{m} a_{i} {r+i\choose i}
$$
be the Hilbert polynomial of $\cP$. If the dimension of $\cP$ is $0$,
then $a_{m}=0$. The \emph{Jacobi conjecture} of Tomasovic states
that $a_{m-1}\le J$, where $J$ is defined as above, according to the
order matrix of the PDE system. This conjecture had already been
presented by his thesis adviser, Kolchin, in 1966
 \cite{Kolchin1966}. Tomasovic proved it for linear systems and for
$n\le 2$. A proof in the linear case was also given by
Kondratieva \emph{et al.} \cite{Kondratieva1999}.
\smallskip

Tomasovic's results remained unpublished, due to his
untimely death. His dissertation contains interesting material that
requires further examination.

\subsection{Order and dimension}
In 1983, Cohn proved that the bound would imply the \emph{``dimension
conjecture'': every component defined by a system of $r$ equations has
differential codimension at most $r$}. Tomasovic proved in the chap.~6
of his thesis \cite{Tomasovic76} that the dimension conjecture is
equivalent to this one: \emph{If a system has a component of
differential dimension $0$, then its Jacobi number $J$ is not
$-\infty$.}

We have seen in section~\ref{Ritt} that Ritt's proof in two variables requires
the dimension conjecture for $r=1$. More precisely, Cohn's proof shows
that proving the bound for a system of $n$ equations implies the
dimension conjecture for a system of equations in $r<n$ variables.
Furthermore, Cohn proved that the weak bound and the dual of
``Bézout's bound''\footnote{It is defined as $\sum_{i=1}^{n}
\max_{j}\ord_{x_{j}} u_{i}$.} also imply the dimension conjecture.
\smallskip

It is known that the intersection of two manifolds of differential
codimensions $r$ and $s$ may contain components of codimension greater
that $r+s$ (see \emph{Diff.~alg.} \cite{Ritt50} p.~133) and, as we have already seen
above in section\ref{Ritt}, Jacobi's bound may only be expected to stand for
systems and not for manifolds. If the examples given by Ritt in these
two cases are clearly distinct, we may remark that they are both closely
related to the structure of the singular place of the manifold. We still
need to understand better such paradoxical behaviours and their
possible connections.

\subsection{Kondratieva's proof}
As we have already seen in subsection \ref{bound-proof}, the case
where Jacobi's linearization argument works corresponds to the
regularity hypothesis defined by Johnson in order to prove Janet's
conjecture  \cite{Johnson1978} in 1978: \emph{the differentials
$du_{i}^{(k)}$, $1\le i\le n$, $k\in\N$, are linearly independent.}
Kondratieva \emph{et al.} call such systems \emph{independent systems};
they were able in 1982 \cite{Kondratieva1982b} to
prove the strong bound, using first linearization as in Jacobi's approach
 \cite{Jacobi1}, then Ritt's proof for the linearized system
$du$. However, this result received little attention, the paper being
written in Russian and difficult to find. A new proof, also valid for
independent partial differential systems has been given in 2008 by the same
authors \cite{Kondratieva2009}.

\subsection{Other works}
In 1960, Jacobi's strong bound was rediscovered independently by
Volevich  \cite{Volevich1960} for arbitrary linear systems,
assuming that the truncated determinant does not vanish. One may also
mention the works of Magnus,  \cite{Magnus2001,Magnus2003}
who refers to Chrystal and Jacobi.

\section{Beginning of the \textsc{xxi}$^{\bf th}$ century}

Hrushovski in 2004  \cite{Hrushovski2004} proposed a proof for the
bound in difference algebra. His method is completely different
from those used so far in this field, but it does not seem possible,
or at least not easy, to deduce from this result a proof in the
differential case. 

See our article \cite{OllivierSadik2006a} for a proof of the truncated jacobian
condition in the framework of diffiety theory, assuming the regularity
condition of Johnson. A generalization to underdetermined systems is
also considered. Kondratieva {\it et al.} \cite{Kondratieva2009}
provided a proof of Jacobi's
bound for independent partial differential systems.

In 2001, Pryce \cite{Pryce2001} rediscovered Jacobi's shortest reduction
method in order to provide a efficient method of computing power
series solution of implicit differential algebraic systems.

\section{Conclusion}

One says that Jacobi once told to a student who wanted to read all
the mathematical literature before starting his research: \emph{``Where
would you be if your father before marrying your mother had wanted to
see all the girls of the world?''} So, we may hope that he would have
forgiven us for having forgotten some of his results.

We see nevertheless that a closer look to the past may
be fruitfull. Contemporary mathematicians who turn to computer algebra
will find some common spirit with the approach of these times, when a
great familiary with hand computation produced a still unformal but
deep attention to efficiency.

Some of the results presented here, beyond their  intrinsic
mathematical interest, could help producing improved bounds of
complexity or designing new algorithms. E.g., the shortest
reduction leads to the choice of an ordering for which the
computation of a characteristic set may be easier.

\section*{Thanks}
I express my gratitude to the late Evgeny Pankratiev, Marina
Kondratieva, Alexandr Mihailev, Brahim sadik and the referees for
their carefull rereading and many corrections.

Thanks to Richard Cohn, Marina Kondratieva, Harold Kuhn, William Sit
for scientific comments and historical precisions. I also express my
thanks to Dr~Wolfgang Knobloch and Dr.~Vera Enke (Archiv der BBAW)
for their precious help in my search for Jacobi's manuscripts, to
Bernd Bank for achieving the deciphering of Cohn's letter
{II/13 a)}, to mgr Ivo {\L}aborerevicz (Archiwum
Pa{\'n}stwowe we Wroc{\l}awiu) and mgr Marlena Koter
(Archiwum Pa{\'n}stwowe w Olsztynie), to Jean-Marie Strelcyn
for his kind provinding translations of letters from the polish
archives, to Bärbel Mund (Niedersächsische Staats- und
Universitätsbibliotek Göttingen), Mikael Ragstedt (library of
the Mittag-Leffler Institute). 

Last but not least, I express my gratitude to all the staff of the
Archives and Bibliothèque Centrale de l'École polytechnique.

The ``Groupe Aleph et GÉode'' provided financial
support for paying copies of the documents.
\bigskip

\section*{Warning}
{\sf The bibliography is divided in four parts. The first, \emph{
  Manuscripts} contains primary material, mostly Jacobi's manuscript,
  comming from \emph{Jacobis Nachla{\ss}, Archiv der Berlin-Brandenburgische
  Akademie der Wissenschaften}. Documents are denoted by their archive
  index, e.g. {\rm [I/58 a)]}. The second, \emph{Complete works} contains the
  book were Jacobi's work were published. They are denoted by
  {\rm [GW \dots], [VD]}, \dots\ The third \emph{Crelle Journal} contains the
  issues were the quoted papers of Jacobi were published. Their are
  denoted by {\rm [Crelle\dots]} The fourth and last contains the remaining
  material. They are denoted by numbers.}

\bibliographystyle{ws-procs9x6}

\end{document}